\newcommand{\field}[1]{\mathbb{#1}}
\newcommand{\R}{\field{R}}
\newcommand{\PP}{\field{P}}
\def\der^#1_#2{\frac{\partial^{#1}}{\partial {#2}^{#1}}}
\theoremstyle{plain}
\newtheorem{theorem}{Theorem}
\newtheorem{lemma}{Lemma}
\newtheorem{prop}{Proposition}
\newtheorem{assumption}{Assumption}{\bf}{\rm}
\theoremstyle{remark}
\newcommand{\E}{\field{E}}
\begin{document}

\title{Branching processes and bacterial growth} 

\author{
 Nathalie Krell\thanks{Universit\'e de Rennes 1, CNRS-UMR 6625, Campus de Beaulieu, 35042 Rennes Cedex, France. {\tt email}: nathalie.krell@univ-rennes.fr}
}

\maketitle
\begin{abstract}
We model the growth of a cell population using a piecewise deterministic Markov branching tree. In this model, each cell splits into two offspring at a division rate $B(x)$, which depends on its size $x$. The size of each cell increases exponentially over time, with a growth rate that varies for each individual. Expanding upon the model studied in \cite{hof}, we introduce a scenario with two types of bacteria: those with a young pole and those with an old pole. Additionally, we account for the possibility that a bacterium may not always divide into exactly two offspring. We will demonstrate that our branching process is well-defined and that it satisfies a many-to-one formula. Furthermore, we establish that the mean empirical measure of the model adheres to a growth-fragmentation equation when structured by size, growth rate, and type as state variables.

\end{abstract}

{\small 
\noindent {\it Keywords:} Growth-fragmentation, cell division equation,  Branching processes, multitype, many-to-one formula,  Markov chain on a tree.\\
\noindent {\it Mathematical Subject Classification:} 45K05,  92D25, 60J80, 60J85, 62G05, 62G20.\\}

\section{Introduction}

We will explore the modeling of bacterial growth, with a particular focus on Escherichia coli (E. coli). To do so, we will employ branching processes. A bacterium  grows exponentially at a constant rate $\tau$. Naturally, there is variability in the growth rate $\tau$, see for example \cite{Sturm}. In a previous study \cite{BB}, we examined in detail the dependencies between bacteria, specifically how this rate might be influenced by the mother, grandmother, and even the bacterium's type. For further details, one can refer to the aforementioned paper. Indeed, certain bacteria possess an old pole, while others have a new pole; more precisely, one could be interested in how long a bacterium has belonged to a particular type. However, in this paper, we will simplify the approach by focusing solely on the bacterium's current type, considering only two types: old pole and new pole.

Each bacterium divides independently of the others, conditioned on their size at birth, and according to a specific intensity $B$. In an earlier work \cite{hof2}, we compared an age-dependent model with one that depends on size. At that time, we demonstrated that an age-dependent model was far from realistic. Estimating the intensity $B$ using this model and then regenerating a colony of bacteria with this intensity resulted in an accumulation of small bacteria, which was entirely inconsistent with experimental data. Therefore, in this paper, I will hypothesize that the intensity $B$ depends on the size of the bacteria. In ongoing research with Benoîte de Saporta and Bertrand Cloez, we are conducting a comparative study with a novel model introduced by biologists—the Adder model, where the intensity depends on the bacterium's elongation, defined as the difference between its current size and its size at birth.

When a bacterium divides, its size is divided into two parts, which may not necessarily be equal. We will assume that the old cell receives a portion $\theta_0$ from the mother, while the young cell receives a portion $\theta_1 := 1-\theta_0$. Thus, $\theta_0$ and $\theta_1$ are two positive real numbers whose sum equals 1.

The case we will study here extends the model explored in \cite{hof}, adding another layer of complexity by introducing a dependency based on the bacterium's type, and allowing for unequal division.

In Section \ref{size_models}, we construct the model $\big((\xi_u, \tau_u, p_u), u \in {\mathcal U}\big)$, which represents the sizes, growth rates, and types of cells as a Markov chain along the genealogical tree. This discrete model is then embedded into a continuous-time piecewise deterministic Markov process $(X,V,P)$, capturing the sizes, growth rates, and types of cells present in the system at any given moment. We will define the marked bacteria process, which will yield a many-to-one formula—a fundamental tool for proving Theorem \ref{sol transport general}. Notably, this marked bacterium is directly linked to our branching process and does not merely exist in law, as is often the case with many-to-one formulas. In Theorem \ref{sol transport general}, we explicitly establish the connection between the mean empirical measure of $(X,V,P)$ and the growth-fragmentation type equation \ref{transport variabilite}, extending the results obtained in \cite{hof}. In Section \ref{statistical analysis}, we explore potential applications of the aforementioned probabilistic results, focusing on the nonparametric estimation of the jump rate $B$. Finally, in the last section, we provide the proof of Theorem \ref{sol transport general}.

\section{Size-structured models } \label{size_models}
\subsection{The branching process } %

\label{size-strutured models}

We will use a genealogical tree with the Ulam-Harris-Neveu labelling. In each node we will be able to store all information we need to build our process of growth of the size of our bacteria.

We can note that we are in a simpler tree case because each division of a bacterium gives birth to two bacteria.

Let
${\mathcal U} := \bigcup_{n=0}^\infty \{0,1\}^n$ 
(with $\{0,1\}^0:=\{\emptyset\}$) denotes the infinite binary genealogical tree. Each node $u \in {\mathcal U}$ is identified with a cell of the population and has a mark 
$$(\xi_u, b_u, \zeta_u, d_u, p_u,  \tau_u, \theta_u),$$
where $\xi_u$ is the size at birth, $b_u$ the birthtime,  $d_u$ the deathtime, $\zeta_u$ the lifetime of $u$,   $p_u$ the type, $\tau_u$ the growth rate,  and $\theta_u$ the proportion of the mother cell inherited at birth.  $\xi_u, b_u, d_u, \zeta_u$ are positive real numbers. $p_0$ is 0 if the bacterium is an old pole and 1 if it's a new pole. The growth rate $\tau $ and the $\theta$ only depend on the type. Therefor $\tau_u=\tau_{p_u}$ and $\theta_u=\theta_{p_u}$. To streamline the notation, we will use $\theta_0$ to represent the parameter $\theta$ associated with the old pole, and $\theta_1$ for the parameter $\theta$ associated with the new pole.. The same applies to $\alpha$.  The $\tau_u$ will live in a compact space ${\mathcal E}
\subset\mathbb{R }$.

 The evolution $\big(\xi_t^{u}, t \in [b_u,b_u+\zeta_u)\big)$ of the size of $u$ during its lifetime is governed by 
\begin{equation} \label{piecewise deterministic}
\xi_t^u=\xi_u \exp\big(\tau_u (t-b_u)\big)\;\;\text{for}\;\;t\in [b_u,b_u+\zeta_u).
\end{equation}
Each cell splits into two offsprings of the same size according to a division rate $B(x)$ for $x\in (0,\infty)$. Equivalently 
\begin{equation} \label{def div rate}
\PP\big(\zeta_{u}\in [t,t+dt]\,\big|\,\zeta_{u}\geq t,\xi_{u}=x, \tau_u=v\big) = B\big(x\exp(v t)\big)dt.
\end{equation}
At division, a cell splits into two offsprings of the same size. If $u^-$ denotes the parent of $u$, we thus have 
\begin{equation} \label{fundamental}
\xi_u = \theta_u \xi_{u^-}\exp\big(\tau_{u^-} \zeta_{u^-}\big).
\end{equation}
Finally, the growth rate $\tau_u$ of $u$ is inherited from its parent $\tau_{u^-}$ and depending on the type $p_u=i$ according to a Markov kernel
\begin{equation} \label{markov heritage}
\rho_i (v, dv')=\PP^{i}(\tau_u\in dv'\,|\,\tau_{u^-}=v),
\end{equation}
where $v >0$ , $i\in\{0,1\}$ and $\rho_i(v, dv')$ is a probability measure on $(0,\infty)$ for each $v>0$ and  $i\in\{0,1\}$.

We denote by $y = (x,j)$ an element of the state space ${\mathcal T} = [0,\infty)\times \{0,1\}$. Introduce the transition kernel 
$$
{\mathcal\mathbb Q}_{B}((x,i),dx'\times \{i^{'}\}) = \PP\big((\xi_u,t_u)\in dx'\times \{i^{'}\}\big|\,(\xi_{u^-},t_{u^-})=(x,i)\big)
$$
of the size at birth and type $(\xi_u,p_u)$ of a descendant $u \in {\mathcal U}$, given the size at death and type of its parent $(\xi_{u^-},p_{u^-})$.
From \eqref{def div rate}, we infer that $\PP(\zeta_{u}\in dt\,\big|\,\xi_{u^-}=x, p_{u}=j,\tau_{u-}=v)$ is equal to
$$
B\big(x\exp(v t)\big)\exp\Big(-\int_0^t B\big(x\exp(v s)\big)ds\Big)dt.
$$
Using formula (\ref{fundamental}), by a simple change of variables
$$\PP\big(\xi_u\in dx' \, ,\, p_{u}=i' \big|\,\xi_{u^-}=x, p_{u^-}=i,\tau_{u-}=v\big)
=\frac{B(x')}{2v x'}{\bf 1}_{\{x' \geq x\theta_{i'}\}}\exp\big(-\int_{x \theta_{i'}}^{x'} \tfrac{B(s)}{vs}ds\big)dx'.
$$
We obtain an explicit formula for
$$
{\mathcal\mathbb P}_{B}((x,i,v),dx'\times \{i^{'}\}\times dv')  ={\mathcal P}_B\big((x,i,v), (x', i', dv'))dx',$$
with
\begin{equation} 
{\mathcal\mathbb P}_B\big((x,i,v), (x', i',dv'))  = \frac{B(x'/\theta_i)}{v x'}{\bf 1}_{\{x' \geq x \theta_{i'}\}}\exp\big(-\int_{x\theta_{i'}}^{x'} \tfrac{B(s/\theta_i))}{vs}ds\big)\rho_i(v, dv')
 .  \label{density explicit}
\end{equation}

Eq. \eqref{piecewise deterministic}, \eqref{def div rate}, \eqref{fundamental} and \eqref{markov heritage} completely determine the dynamics of the model $\big((\xi_u, \tau_u, p_u), u \in {\mathcal U}\big)$, as a Markov chain on a tree, given an additional initial condition $(\xi_\emptyset, \tau_\emptyset, p_\emptyset)$ on the root. 
The chain is embedded into a piecewise deterministic continuous Markov process thanks to \eqref{piecewise deterministic} by setting
$$(\xi^u_t, \tau_t^u, p_t^u) =\big(\xi_u \exp\big(\tau_u( t-b_u)\big), \tau_u, p_u\big)\;\;\text{for}\;\;t \in [b_u, b_u+\zeta_u)$$
and $(0,0,0)$ otherwise.
Define
$$\big(X(t),V(t), P(t)\big) = \Big(\big(X_1(t),V_1(t), P_1 (t)\big),\big(X_2(t),V_2(t), P_2 (t)\big),\ldots\Big)$$
as the process of sizes and growth rates of the living particles in the system at time $t$. 
As for the fragmentation process we have an identity between point measures
\begin{equation} \label{identity point measures}
\sum_{i = 1}^\infty {\bf 1}_{\{X_i(t)>0\}}\delta_{(X_i(t),V_i(t), P_i (t))} = \sum_{u \in {\mathcal U}}{\bf 1}_{\{b_u \leq t < b_u +\zeta_u\}}\delta_{(\xi_t^u,\tau_t^u, p_t^u)}
\end{equation}
where $\delta$ denotes the Dirac mass.

In the sequel, the following basic assumption is in force
\begin{assumption}[Basic assumption on $B$ and $\rho$] \label{basic assumption}
The division rate $x \leadsto B(x)$ is continuous. We have $B(0)=0$ and 
$\int^\infty x^{-1}B(x)dx=\infty$. The Markov kernel $\rho_0 (v,dv')$ and $\rho_1 (v,dv')$ are
 defined
on a compact set ${\mathcal E} \subset (0,\infty)$.
\end{assumption}
\begin{prop} \label{existence processus}
Work under Assumption \ref{basic assumption}. The law of 
$$\big((X(t),V(t),P(t)), t \geq 0\big)\;\;\text{or}\;\;\big((\xi_u, \tau_u, p_u), u \in {\mathcal U}\big)\;\;\text{or}\;\;\big((\xi_t^u, \zeta_t^u, p_t^u), t \geq 0, u \in {\mathcal U}\big)$$ is well-defined on an appropriate probability space.
\end{prop}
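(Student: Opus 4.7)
The plan is to build the Markov chain $((\xi_u, \tau_u, p_u), u \in \mathcal{U})$ on the binary tree by induction on the generation $|u|$, and then verify that the induced continuous-time process is non-explosive so that the point measure identity \eqref{identity point measures} is locally finite pathwise.

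Given the root datum $(\xi_\emptyset, \tau_\emptyset, p_\emptyset)$, one samples $\zeta_u$ from the distribution with hazard $s \mapsto B(\xi_u \exp(\tau_u s))$ and then the offspring marks $(\xi_{ui}, \tau_{ui}, p_{ui})$, $i \in \{0,1\}$, from the explicit kernel \eqref{density explicit}. Under Assumption \ref{basic assumption}, continuity of $B$ makes the hazard locally integrable, so $\zeta_u > 0$ a.s., while the substitution $y = \xi_u e^{\tau_u s}$ yields
\[
\int_0^\infty B\bigl(\xi_u e^{\tau_u s}\bigr)\, ds = \frac{1}{\tau_u}\int_{\xi_u}^\infty \frac{B(y)}{y}\, dy = \infty
\]
by hypothesis, so $\zeta_u < \infty$ a.s. Iterating on $\mathcal{U}$ and invoking the Kolmogorov extension theorem produces a law for $((\xi_u,\tau_u,p_u), u \in \mathcal{U})$ on a suitable probability space; the birthtimes $b_u = \sum_{v \prec u} \zeta_v$ and the piecewise deterministic trajectories $\xi_t^u$ are then measurable functions of these data, which handles the second and third representations in the statement.

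For the continuous-time process $(X,V,P)$ one must rule out explosion, that is show $N(t) := \card\{u : b_u \leq t < b_u + \zeta_u\}$ is a.s. finite for every $t$. Since $\mathcal{E}$ is compact, set $v_{\max} := \sup \mathcal{E}$; from \eqref{piecewise deterministic} and \eqref{fundamental} together with $\theta_0,\theta_1 \in (0,1)$, every living cell at time $t \leq T$ has size at most $R_T := \xi_\emptyset e^{v_{\max} T}$, and continuity of $B$ gives $M_T := \sup_{[0,R_T]} B < \infty$. Hence on $[0,T]$ the per-cell division rate is bounded by $M_T$, and $N$ is stochastically dominated by a Yule process of rate $M_T$, whose mean at $T$ equals $e^{M_T T} < \infty$. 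Therefore $N(T) < \infty$ a.s. for every $T$, the right-hand side of \eqref{identity point measures} is locally finite, and $(X(t),V(t),P(t))$ is well-defined after any measurable enumeration. The main obstacle is precisely this non-explosion step, since the assumptions impose no global growth condition on $B$: the argument succeeds thanks to the compactness of $\mathcal{E}$, which produces a deterministic envelope $R_T$ for all cell sizes on $[0,T]$ and thereby reduces the problem to continuity of $B$ on a compact set.
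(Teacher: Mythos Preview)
Your argument is correct. The paper does not actually prove this proposition: it declares the construction ``classic'' and refers to \cite{Bertoin} and Proposition~1 of \cite{hof}. What you have written is exactly the standard argument those references contain --- inductive construction of the marks along $\mathcal{U}$, almost-sure finiteness of each lifetime via the change of variables $y=\xi_u e^{\tau_u s}$ and the hypothesis $\int^\infty y^{-1}B(y)\,dy=\infty$, and non-explosion obtained by using the compactness of $\mathcal{E}$ to bound all living sizes by $\xi_\emptyset e^{v_{\max}T}$ on $[0,T]$ and then dominating the population by a Yule process of rate $\sup_{[0,R_T]}B$. So you have supplied precisely what the paper omits, and along the route it implicitly has in mind.
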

If $\mu$ is a probability measure on the state space ${\mathcal S} = [0,\infty)\times {\mathcal E}$, we shall denote indifferently by $\PP_\mu$ the law of any of the three processes above 
where the root $(\xi_{\emptyset}, \tau_{\emptyset},p_{\emptyset})$ has distribution $\mu$. The demonstration to prove the Proposition \ref{existence processus} is classic (see for instance \cite{Bertoin} and the references therein). It is a generalization of Proposition 1 of \cite{hof}.

\subsection{A many-to-one formula via a tagged branch} \label{a many-to-one formula via a tagged branch}

We will adopt the notations introduced in \cite{hof}, while adapting them to the broader context under consideration. For $u \in {\mathcal U}$, let $m^i u$ denote the $i$-th ancestor in the genealogy of $u$. We then define $$\overline{\tau_t^u} = \sum_{i = 1}^{|u|}\tau_{m^i u}\zeta_{m^i u}+\tau_t^u(t-b_u)\;\;\text{for}\;\;t\in [b_u, b_u + \zeta_u)$$ and set it to $0$ otherwise, representing the accumulated growth rate along its ancestors up to time $t$.

Let $n(u)$ denote the number of 1's in $u$, corresponding to the number of new pole bacteria in $u$'s genealogy. Similarly, $o(u)$ denotes the number of 0's in $u$, representing the number of old pole bacteria in the genealogy of $u$. In the same spirit as tagged fragments in fragmentation processes (see Bertoin's book \cite{Bertoin} for instance), we randomly select a branch in the genealogical tree: for any $k \geq 1$, if $\vartheta_k$ denotes the node of the tagged branch at the $k$-th generation, we have $$\PP(\vartheta_k = u)=\theta_0^{o(u)}\theta_1^{n(u)}\;\;\text{for every}\;\;u\in {\mathcal U},$$and $0$ otherwise.

For $t\geq 0$, the relation
$$b_{\vartheta_{C_t}}\leq t<b_{\vartheta_{C_t}}+\zeta_{\vartheta_{C_t}}$$
uniquely defines a counting process $(C_t, t \geq 0)$ with $C_0=0$. This process $C_t$ tracks the number of divisions undergone by the tagged bacterium. We can similarly define the number of divisions resulting in a bacterium with a new pole as $C_t^n := n(\vartheta_{C_t})$ and with an old pole as $C_t^o := o(\vartheta_{C_t})$. Naturally, for all $t \in \mathbb{R}$, we have $C_t = C_t^n + C_t^o$.

The process $C_t$ then allows us to define a tagged process for size, growth rate, accumulated growth rate and type as follows:
$$
\big(\chi(t), {\mathcal V}(t), \overline{{\mathcal V}}(t), Q(t)\big) = \Big(\xi_t^{\vartheta_{C_t}}, \tau_{t}^{\vartheta_{C_t}}, \overline{\tau_t^{\vartheta_{C_t}}}, p_{t}^{\vartheta_{C_t}}\Big)\;\;\text{for}\;\;t \in [b_{\vartheta_{C_t}},b_{\vartheta_{C_t}}+\zeta_{\vartheta_{C_t}})
$$
and $0$ otherwise. We then have the representation \begin{equation} \label{rep chi} \chi(t) = xe^{\overline{{\mathcal V}}(t)}\theta_0^{C_t^o}\theta_1^{C_t^n}.
\end{equation} Since the $\tau$ has a value in ${\mathcal E}$ which is a compact space there exist two positive real $e_{\min}<e_{\max}$ such that ${\mathcal V}(t) \in [e_{\min}, e_{\max}]$, it follows that \begin{equation} \label{controle croissance cumulee} e_{\min}t \leq \overline{{\mathcal V}}(t) \leq e_{\max}t. \end{equation}

The behavior of $\big(\chi(t), {\mathcal V}(t), \overline{{\mathcal V}}(t), Q(t)\big)$ can be linked to certain functionals of the entire particle system through a so-called many-to-one formula. This serves as the key tool for proving Theorem \ref{sol transport general}.

\begin{prop}[A many-to-one formula] \label{many-to-one} Under Assumption \ref{basic assumption}, for $x \in (0,\infty)$, let $\PP_x$ be defined as in Lemma \ref{counting property}. For every $t\geq 0$, the following holds: $$\E_x\big[\phi\big(\chi(t), {\mathcal V}(t), \overline{{\mathcal V}}(t)\big)\big] = \E_x\Big[\sum_{u \in {\mathcal U}}\xi_t^u\frac{e^{-\overline{\tau_t^u}}}{x}\phi\big(\xi_t^u, \tau_t^u, \overline{\tau_t^u}\big)\Big],$$
for every function $\phi: {\mathcal S}\times [0,\infty) \rightarrow [0,\infty)$. \end{prop}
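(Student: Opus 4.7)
The plan is to reduce the identity to a pointwise algebraic identity on the event $\{b_u\leq t<b_u+\zeta_u\}$, namely
$$\frac{\xi_t^u}{x}\,e^{-\overline{\tau_t^u}}=\theta_0^{o(u)}\theta_1^{n(u)},$$
which converts the right-hand side into an expectation weighted by the law of the tagged branch and hence identifies it with the left-hand side.

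First I would iterate the fundamental recursion $\xi_u=\theta_u\xi_{u^-}\exp(\tau_{u^-}\zeta_{u^-})$ from \eqref{fundamental} along the ancestral line of $u$ up to the root, obtaining
$$\xi_u=x\,\prod_{i=0}^{|u|-1}\theta_{m^i u}\,\exp\Bigl(\sum_{i=1}^{|u|}\tau_{m^i u}\zeta_{m^i u}\Bigr).$$
With the Ulam-Harris-Neveu convention, $\theta_{m^i u}$ equals $\theta_0$ (resp. $\theta_1$) precisely when $m^i u$ is the $0$-child (old pole) or the $1$-child (new pole) of its parent, so the product telescopes to $\theta_0^{o(u)}\theta_1^{n(u)}$. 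Multiplying by $\exp(\tau_u(t-b_u))$ and recognising the definition of $\overline{\tau_t^u}$ yields
$$\xi_t^u=x\,\theta_0^{o(u)}\theta_1^{n(u)}\,e^{\overline{\tau_t^u}}\qquad\text{on }\{b_u\leq t<b_u+\zeta_u\},$$
which is the targeted pointwise identity.

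Next I would expand the left-hand side using the construction of the tagged branch. By Bernoulli sampling with parameters $(\theta_0,\theta_1)$, the sequence $(\vartheta_k)_{k\geq 0}$ is a random walk on $\mathcal U$ drawn independently of the genealogical process, with $\PP(\vartheta_k=u)=\theta_0^{o(u)}\theta_1^{n(u)}$ for $|u|=k$. Since at every fixed time $t$ there is a unique node $\vartheta_{C_t}$ of the tagged branch alive at $t$, a Tonelli argument combined with independence gives
$$\E_x\bigl[\phi(\chi(t),\mathcal V(t),\overline{\mathcal V}(t))\bigr]=\sum_{u\in\mathcal U}\theta_0^{o(u)}\theta_1^{n(u)}\,\E_x\bigl[\mathbf 1_{\{b_u\leq t<b_u+\zeta_u\}}\phi(\xi_t^u,\tau_t^u,\overline{\tau_t^u})\bigr].$$
Substituting the identity from the first step and swapping sum and expectation (justified by positivity of $\phi$) yields exactly the right-hand side of the announced formula.

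The main obstacle, and really the only step that deserves care, is the bookkeeping needed to identify $\prod_{i=0}^{|u|-1}\theta_{m^i u}$ with $\theta_0^{o(u)}\theta_1^{n(u)}$: the convention tying the $\{0,1\}$-label in the Ulam-Harris-Neveu encoding to the old/new pole type of the corresponding child must be used, and this is the only place where the generalisation over \cite{hof} matters. Once this combinatorial check is done, the proof boils down to one algebraic identity and a Tonelli exchange.
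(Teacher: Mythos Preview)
Your argument is correct and follows essentially the same route as the paper. The paper starts from the representation \eqref{rep chi}, decomposes the left-hand side over $\{t\in I_v,\;v=\vartheta_{C_t}\}$, and then conditions on the discrete filtration ${\mathcal H}_{|v|}$ to replace the indicator $\{v=\vartheta_{C_t}\}$ by the weight $\theta_0^{o(v)}\theta_1^{n(v)}=\xi_v e^{-\overline{\tau_{b_v}^v}}/x$; you instead derive that pointwise identity directly by iterating \eqref{fundamental} and then invoke the independence of the Bernoulli tagging from the genealogy together with Tonelli, which is exactly the content of the paper's conditioning step. The only cosmetic difference is that you re-derive \eqref{rep chi} from scratch rather than quoting it.
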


\begin{proof}[Proof of Proposition \ref{many-to-one}]
For $v \in {\mathcal U}$, set $I_v = [b_{v}, b_v+\zeta_v)$. By representation \eqref{rep chi}, we have
\begin{align*}
\E_{x}\big[\phi\big(\chi(t), {\mathcal V}(t), \overline{{\mathcal V}}(t)\big)\big] & = \E_{x}\big[\phi\big(xe^{\overline{{\mathcal V}}(t)}\theta_0^{C_t^o}\theta_1^{C_t^n}, {\mathcal V}(t), \overline{{\mathcal V}}(t)\big)\big] \\
& =  \E_{x}\Big[\sum_{v \in {\mathcal U}}\phi\big(xe^{\overline{\tau_t^v}}\theta_0^{o(v)}\theta_1^{n(v)}, \tau_t^v, \overline{\tau_t^v}\big){\bf 1}_{\{t\in I_v, v=\vartheta_{C_t}\}}\Big].
\end{align*}
Introduce the discrete filtration ${\mathcal H}_n$ generated by $(\xi_u, \zeta_u, \tau_u)$ for every $u$ such that $|u|\leq n$. Conditioning with respect to ${\mathcal H}_{|v|}$ and noting that on $\{t \in I_v\}$, we have
$$\PP\big(\vartheta_{C_t} = v\,|\,{\mathcal H}_{|v|}\big)=\theta_0^{o(v)}\theta_1^{n(v)} = \frac{\xi_v e^{-\overline{\tau_{b_v}^v}}}{x},$$
we derive
\begin{align*}
&\E_{x}\Big[\sum_{v \in {\mathcal U}}\phi\big(xe^{\overline{\tau_t^v}} \theta_0^{o(v)}\theta_1^{n(v)}, \tau_t^v, \overline{\tau_t^v}\big){\bf 1}_{\{t\in I_v, v=\vartheta_{C_t}\}}\Big]\\  &= \,\E_{x}\Big[\sum_{v \in {\mathcal U}}\xi_v \frac{e^{-\overline{\tau_{b_v}^v}}}{x}\phi\big(xe^{\overline{\tau_t^v}}\theta_0^{o(v)}\theta_1^{n(v)}, \tau_t^v, \overline{\tau_t^v}\big){\bf 1}_{\{t\in I_v\}}\Big] \\&
=\, \E_x\Big[\sum_{u \in {\mathcal U}}\xi_t^u\frac{e^{-\overline{\tau_t^u}}}{x}\phi\big(\xi_t^u, \tau_t^u, \overline{\tau_t^u}\big)\Big].
\end{align*}
\end{proof}

\subsection{The behaviour of the mean empirical measure}
Denote by ${\mathcal C}_0^1({\mathcal S})$ the set of real-valued test functions with compact support in the interior of ${\mathcal S}$. 
\begin{theorem}[Behaviour of the empirical mean] \label{sol transport general}
Work under Assumption \ref{basic assumption}. Let $\mu$ be a probability distribution on ${\mathcal S}$. Define the distribution $n(t,dx,dv)$ by
$$\langle n(t,\cdot),\phi \rangle = \E_{\mu}\Big[\sum_{i = 1}^\infty\phi\big(X_i(t), V_i(t)\big)\Big]\;\;\text{for every}\;\;\phi \in {\mathcal C}^1_0({\mathcal S}).$$
Then $n(t,\cdot)$ solves (in a weak sense)
\begin{equation} \label{transport variabilite}
\left\{
\begin{array}{rl}
& \partial_t n(t,x,v) + v\, \partial_x\big(x n(t,x, v)\big)+B(x)n(t,x,v) \\ \\
 & =\; \int_{{\mathcal E}}\frac{\phi(x,v',0)}{\theta_0^2}\rho_0(v,dv')B(x/\theta_0)n(t,x/\theta_0,dv')\\ \\&+\frac{\phi(x,v',1)}{\theta_1^2}\rho_1(v,dv')B(x/\theta_1)n(t,x/\theta_1,dv'), \\ \\
 & n(0,x,v)= n^{(0)}(x,v), x \geq 0.
\end{array}
\right.
\end{equation}
with initial condition $n^{(0)}(dx,dv) = \mu(dx,dv)$. 
\end{theorem}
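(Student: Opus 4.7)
My approach combines Proposition \ref{many-to-one} (extended in the obvious way to carry the type variable $Q(t)$) with Dynkin's formula applied to the tagged piecewise deterministic process $(\chi, \mathcal{V}, Q, C^o, C^n)$. The overall scheme is: express $\langle n(t,\cdot),\phi\rangle$ as an expectation involving the tagged branch, differentiate in $t$ using the generator of the tagged PDMP, then translate back to the whole population via a reverse many-to-one step, and finally pass from the resulting weak form to the strong PDE \eqref{transport variabilite} by integration by parts and a change of variables.

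First, for $\phi\in\mathcal{C}_0^1(\mathcal{S})$ I would apply Proposition \ref{many-to-one} to the test function $(y,v,\bar v,j)\mapsto \tfrac{xe^{\bar v}}{y}\phi(y,v,j)$ and combine it with the representation $\chi(t) = xe^{\overline{\mathcal{V}}(t)}\theta_0^{C_t^o}\theta_1^{C_t^n}$ from \eqref{rep chi} to obtain
\begin{equation*}
\langle n(t,\cdot),\phi\rangle = \E_\mu\Big[\sum_{u\in\mathcal{U}}\ind{\{b_u\leq t<b_u+\zeta_u\}}\phi(\xi_t^u,\tau_t^u,p_t^u)\Big] = \E_\mu\bigl[\theta_0^{-C_t^o}\theta_1^{-C_t^n}\phi(\chi(t),\mathcal{V}(t),Q(t))\bigr],
\end{equation*}
the weight being precisely the inverse of the probability that the tagged branch follows a given lineage.

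Next I would write down the generator $\mathcal{A}$ of $(\chi,\mathcal{V},Q,C^o,C^n)$: between jumps $\dot\chi=\mathcal{V}\chi$ and the remaining coordinates are frozen; jumps occur at rate $B(\chi)$, and at a jump the state moves to $(\theta_i\chi, v', i, C^o+\ind{\{i=0\}}, C^n+\ind{\{i=1\}})$ with $i\in\{0,1\}$ drawn with weight $\theta_i$ and $v'\sim\rho_i(\mathcal{V},\cdot)$. Applying $\mathcal{A}$ to $g(y,v,j,k,\ell):=\theta_0^{-k}\theta_1^{-\ell}\phi(y,v,j)$, the selection weights $\theta_i$ cancel exactly against the $\theta_i^{-1}$ introduced by the counter increments, and Dynkin's formula combined with Proposition \ref{many-to-one} in reverse yields the weak equation
\begin{equation*}
\frac{d}{dt}\langle n(t,\cdot),\phi\rangle = \int\Bigl\{vx\,\partial_x\phi(x,v,j) + B(x)\Bigl[\sum_{i\in\{0,1\}}\int\phi(\theta_i x,v',i)\,\rho_i(v,dv') - \phi(x,v,j)\Bigr]\Bigr\}n(t,dx,dv,dj).
\end{equation*}
This is \eqref{transport variabilite} in weak form; an integration by parts in $x$ for the transport term and the change of variables $x\mapsto x/\theta_i$ (producing the Jacobian factors in the gain terms) give the strong form. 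The initial condition $n(0,\cdot)=\mu$ is immediate from the definition of $n$.

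The main technical difficulty will be integrability: the weight $\theta_0^{-C_t^o}\theta_1^{-C_t^n}$ is unbounded since $\theta_i<1$, while $C_t=C_t^o+C_t^n$ has no deterministic bound on $[0,t]$. To justify Dynkin's formula on $[0,t]$ and the interchange of expectation and time-derivative, I would exploit the confinement \eqref{controle croissance cumulee} of $\overline{\mathcal{V}}$ together with Lemma \ref{counting property} to which the statement of Proposition \ref{many-to-one} already appeals, in order to derive exponential moment bounds on $C_t$ on any finite horizon. Once these a priori estimates are in place the computation reduces to a routine generator calculation that extends the argument of the corresponding result in \cite{hof} to the present multitype, unequal-division setting.
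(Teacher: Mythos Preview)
Your proposal is correct and, at the structural level, coincides with the paper's proof: both express $\langle n(t,\cdot),\phi\rangle$ via the many-to-one formula and differentiate in $t$ along the tagged branch. The paper does this by hand---it studies the difference quotient $\Delta_h\langle n(t,\cdot),\phi\rangle$, decomposes on the events $\{C_{t+h}-C_t=0\}$, $\{C_{t+h}-C_t=1\}$, $\{C_{t+h}-C_t\geq 2\}$, and uses Lemma~\ref{counting property} for each piece---which is precisely an explicit derivation of the Dynkin formula you invoke abstractly; your cancellation of the selection weights $\theta_i$ against the counter increments is exactly what emerges from the paper's analysis of the $\{C_{t+h}-C_t=1\}$ term.

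The one substantive difference is how the unbounded weight $\theta_0^{-C_t^o}\theta_1^{-C_t^n}$ is controlled. Rather than seeking exponential moments of $C_t$, the paper keeps the weight in the equivalent form $x\,e^{\overline{\mathcal V}(t)}/\chi(t)$ furnished by \eqref{rep chi}; then on $\{\phi(\chi(t),\cdot)\neq 0\}$ the compact support of $\phi$ in the interior of $\mathcal S$ bounds $\chi(t)$ away from $0$, while \eqref{controle croissance cumulee} bounds $\overline{\mathcal V}(t)$ above, yielding a deterministic bound on the integrand with no moment estimates needed. This shortcut is available to you as well and dissolves the technical difficulty you flag in your last paragraph; by contrast, extracting global exponential moments of $C_t$ from the purely local-in-$h$ statements of Lemma~\ref{counting property} would be an unnecessary detour.
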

The demonstration generalizes this fact in \cite{hof}, which was inspired by Bertoin \cite{Bertoin} and Haas \cite{Haas} and it is made in the Section \ref{proof}. Alternative approaches to the same kind of questions include the probabilistic studies of Chauvin {\it et al.} \cite{CRW}, Bansaye {\it et al.} \cite{BDMV} or  Harris and Roberts \cite{HR} and the references therein.

\section{Statistical applications} \label{statistical analysis}

To underscore the practical relevance of our findings, I will revisit the results presented in \cite{hof}, which were originally demonstrated within a more constrained framework than the one we explore in this paper. For conciseness, the results will be briefly summarized here, with full details available in the original work. I will conclude by discussing our ongoing collaboration with Benoîte de Saporta and Bertrand Cloez, where we are extending these results to the broader context addressed in this paper.

The theories and propositions previously established applied to a specific case involving a single type of bacterium that divided symmetrically into two equal parts \cite{hof}. The use of probabilistic methods enabled the demonstration of statistical results concerning the non-parametric estimation of $B$. Specifically, it
was shown that assuming, additionally, that ${\mathcal P}_B$ possesses an invariant probability measure $\nu_B(d\boldsymbol{x})$, that is, a solution to

\begin{equation} \label{def mesure invariante} \nu_B{\mathcal P}_B = \nu_B, \end{equation}
where
$$\mu{\mathcal P}_B(d\boldsymbol{y})=\int_{{\mathcal S}}\mu(d\boldsymbol{x}){\mathcal P}_B(\boldsymbol{x},d\boldsymbol{y})$$
represents the left action of positive measures $\mu(d\boldsymbol{x})$ on $\mathcal{S}$ under the transition ${\mathcal P}_B$.

\begin{prop}\cite{hof} Under Assumption 1 in \cite{hof}, the operator ${\mathcal P}_B$ indeed admits an invariant probability measure $\nu_B$ of the form $\nu_B(d\boldsymbol{x}) = \nu_B(x, dv) dx$. Moreover, we have the representation

\begin{equation} \label{representation cle} \nu_B(y) = \frac{B(2y)}{y} \E_{\nu_B}\Big[\frac{1}{\tau_{u^-}}{\bf 1}_{{\xi{u^-} \leq 2y,;\xi_u \geq y}}\Big], \end{equation}
where $\E_{\nu_B}[\cdot]$ denotes the expectation when the initial condition $(\xi_\emptyset, \tau_\emptyset)$ follows the distribution $\nu_B$, and $\nu_B(y) = \int_{{\mathcal E}}\nu_B(y, dv')$ represents the marginal density of the invariant probability measure $\nu_B$ with respect to $y$. \end{prop}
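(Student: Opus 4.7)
The plan is first to establish existence of an invariant probability measure $\nu_B$ for the discrete Markov chain $(\xi_u, \tau_u)$ with kernel ${\mathcal P}_B$, then to extract the representation by unpacking the invariance identity against the explicit kernel \eqref{density explicit} specialized to the symmetric one-type setting $\theta_0 = \theta_1 = 1/2$ of \cite{hof}.

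For existence I would use a Harris-type ergodicity argument. Compactness of ${\mathcal E}$ handles the $\tau$ variable uniformly; in the size variable, Assumption \ref{basic assumption}---in particular $\int^\infty x^{-1}B(x)\,dx = \infty$---forces the conditional distribution of $\xi_u$ given $(\xi_{u^-},\tau_{u^-})$ to have a rapidly decaying tail, which yields a geometric drift condition for a Lyapunov function such as $V(x,v) = 1 + x^p$ with suitable $p > 0$. A minorization condition on a compact size interval follows from the observation that \eqref{density explicit} gives a density bounded below uniformly on compacts in $(x,x')$. Combining drift and minorization in the Meyn--Tweedie framework yields a unique invariant probability measure $\nu_B$. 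Absolute continuity in the size variable, $\nu_B(d{\boldsymbol x}) = \nu_B(x,dv)\,dx$, is then immediate: the kernel ${\mathcal P}_B((x,v),\cdot)$ is already absolutely continuous in $x'$, so applying $\nu_B = \nu_B {\mathcal P}_B$ once produces a measure with a density in $x$.

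To obtain the representation, I would write the invariance identity in the symmetric one-type case explicitly,
$$\nu_B(y,dv') = \int_{{\mathcal S}}\nu_B(dx,dv)\,\frac{B(2y)}{vy}\,{\bf 1}_{\{y \geq x/2\}}\,\exp\!\Big(-\int_{x/2}^{y}\tfrac{B(2s)}{vs}\,ds\Big)\rho(v,dv'),$$
and integrate $v'$ out using that $\rho(v,\cdot)$ is a probability measure. Factoring $B(2y)/y$ out, the remaining exponential is exactly $\PP(\xi_u \geq y \mid \xi_{u^-}=x,\,\tau_{u^-}=v)$, since differentiating it in $y$ recovers the density of $\xi_u$ given the parent. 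Viewing the integral as an expectation under $\nu_B$ and then applying the tower property to absorb the conditional probability into the joint indicator ${\bf 1}_{\{\xi_{u^-}\leq 2y,\,\xi_u\geq y\}}$ produces the announced formula.

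The main obstacle is the existence step: pinning down a concrete Lyapunov function and matching the minorization domain to the precise assumptions of \cite{hof} requires checking that the growth-fragmentation dynamics indeed return to a small set, which is technical but standard. Once $\nu_B$ is known to exist and to be absolutely continuous in $x$, the representation is an essentially algebraic consequence of invariance combined with the probabilistic reading of the exponential factor in \eqref{density explicit}.
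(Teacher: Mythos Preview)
The paper does not itself prove this proposition; it is quoted from \cite{hof} as a summarized result, with full details deferred to that reference. There is therefore no proof in the present paper to compare your proposal against.

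Assessed on its own merits, your argument is correct. The representation step is exactly right: writing $\nu_B = \nu_B {\mathcal P}_B$ with the explicit kernel \eqref{density explicit} specialized to $\theta_0=\theta_1=1/2$, integrating out $v'$ using that $\rho(v,\cdot)$ is a probability measure, recognizing the exponential factor as the conditional survival probability $\PP(\xi_u \geq y \mid \xi_{u^-}=x,\,\tau_{u^-}=v)$ on $\{x \leq 2y\}$, and then collapsing via the tower property yields \eqref{representation cle} directly. For existence, the Meyn--Tweedie sketch you give (a polynomial Lyapunov function in the size variable combined with a small-set minorization coming from the strictly positive transition density on compacts) is the natural route and is consistent with the hypothesis structure invoked from \cite{hof}; the bookkeeping you flag as the main obstacle is indeed where the work lies, but there is no conceptual gap in the plan.
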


By inverting \eqref{representation cle} and applying an appropriate change of variables, we obtain

\begin{equation} \label{first rep} B(y) = \frac{y}{2}\frac{\nu_B(y/2)}{\E_{\nu_B}\Big[\frac{1}
{\tau_{u^-}}{\bf 1}_{{\xi{u^-} \leq y, ;\xi_u \geq y/2}}\Big]}, \end{equation}
provided the denominator remains positive. This representation \eqref{first rep} suggests an estimation procedure, where the marginal density $\nu_B(y/2)$ and the expectation in the denominator are replaced by their empirical counterparts. To implement this, select a kernel function

$$K: [0,\infty)\rightarrow \R,\;\;\int_{[0,\infty)}K(y)dy=1,$$ 
and define $K_h(y) = h^{-1}K\big(h^{-1}y\big)$ for $y \in [0,\infty)$ and $h > 0$. Our estimator is then given by

\begin{align} \widehat B_n(y) & = \frac{y}{2}\frac{n^{-1}\sum_{u \in {\mathcal U}_n} K_h(\xi_u - y/2)}{n^{-1}\sum{u \in {\mathcal U}_n} \frac{1}{\tau{u^-}} {\bf 1}_{\displaystyle {\xi{u^-} \leq y, \xi_u \geq y/2}} \bigvee \varpi}, \label{def estimator} \end{align}
where $\varpi > 0$ is a threshold ensuring that the estimator is well-defined in all cases, and $x \bigvee y = \max{(x, y)}$. Thus, $(\widehat B_n(y), y \in {\mathcal D})$ is determined by the choice of the kernel $K$, the bandwidth $h > 0$, and the threshold $\varpi > 0$.

\begin{assumption} \label{prop K} The function $K$ has compact support, and for some integer $n_0 \geq 1$, we have
$\int_{[0,\infty)}x^kK(x)dx={\bf 1}_{\{k=0\}}\;\;\text{for}\;\;0 \leq k\leq n_0.$
 \end{assumption}

 For $s > 0$, where $s = \lfloor s\rfloor + {s}$ with $0 < {s} \leq 1$ and $\lfloor s\rfloor$ an integer, consider the Hölder space ${\mathcal H}^s({\mathcal D})$ of functions $f: {\mathcal D} \rightarrow \R$ that possess a derivative of order $\lfloor s \rfloor$ satisfying

\begin{equation} \label{def sob} |f^{\lfloor s \rfloor}(y) - f^{\lfloor s \rfloor}(x)| \leq c(f) |x - y|^{{s}}. \end{equation}

The minimal constant $c(f)$ for which \eqref{def sob} holds defines a semi-norm $|f|_{{\mathcal H}^s(\mathcal{D})}$. We equip the space ${\mathcal H}^s(\mathcal D)$ with the norm
$$\|f\|_{{\mathcal H}^s(\mathcal D)} = \|f\|_{L^\infty({\mathcal D})} + |f|_{{\mathcal H}^s({\mathcal D})}$$ and introduce the corresponding Hölder balls
$${\mathcal H}^s({\mathcal D}, M) = \{B,\;\|B\|_{{\mathcal H}^s({\mathcal D})} \leq M\},\;M>0.$$

For $\lambda > 0$ and a vector of positive constants $\mathfrak{c} = (r, m, \ell, L)$, consider the class ${\mathcal F}^\lambda(\mathfrak{c})$ of continuous functions $B: [0, \infty) \rightarrow [0, \infty)$ such that

\begin{equation} \label{loc control} \int_{0}^{r/2} x^{-1} B(2x) dx \leq L, \; \int_{r/2}^{r} x^{-1} B(2x) dx \geq \ell, \end{equation}
and

\begin{equation} \label{poly control} B(x) \geq m  x^\lambda \;
\text{for}\; x \geq r. \end{equation}

\begin{theorem} \cite{hof}\label{upper bound}
 Under Assumption 3 in \cite{hof} for the sparse tree case and Assumption 4 in \cite{hof} for the full tree case, consider $\widehat{B}$ defined with a kernel $K$ satisfying Assumption 2 in \cite{hof} for some $n_0 > 0$, with

$$h=c_0n^{-1/(2s+1)},\;\;\varpi_n = (\log n)^{-1}.$$

For every $M > 0$, there exist constants $c_0 = c_0(\mathfrak{c}, M)$ and $d(\mathfrak{c}) \geq 0$ such that for every $0 < s < n_0$ and any compact interval ${\mathcal D} \subset (d(\mathfrak{c}), \infty)$ with $\inf {\mathcal D} \geq r/2$, we have

$$
\sup_{\rho, B}\E_{\mu}\big[\|\widehat B_n-B\|_{L^2({\mathcal D})}^2\big]^{1/2} \lesssim (\log n)n^{-s/(2s+1)},$$
where the supremum is taken over
$$\rho \in  {\mathcal M}(\rho_{\min},\rho_{\max})\;\;\text{and}\;\;B \in {\mathcal F}^\lambda(\mathfrak{c}) \cap {\mathcal H}^s({\mathcal D}, M),$$
and $\E_{\mu}[\cdot]$ denotes expectation with respect to any initial distribution $\mu(d\boldsymbol{x})$ for $(\xi_\emptyset, \tau_\emptyset)$ on ${\mathcal S}$ such that $\int_{\mathcal S}{\mathbb V}(\boldsymbol{x})^2\mu(d\boldsymbol{x}) < \infty$. \end{theorem}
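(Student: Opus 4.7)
The plan is to follow the standard nonparametric-estimation strategy for ratio-type estimators, treating the numerator and denominator in \eqref{def estimator} separately and then reassembling them by linearization of the ratio. Using the representation \eqref{first rep}, write
\begin{equation*}
\widehat B_n(y)-B(y)=\frac{y/2}{\widehat D_n(y)\vee\varpi_n}\bigl(\widehat N_n(y)-\nu_B(y/2)\bigr)-\frac{B(y)}{\widehat D_n(y)\vee\varpi_n}\bigl(\widehat D_n(y)\vee\varpi_n-D(y)\bigr),
\end{equation*}
where $\widehat N_n(y)$ and $\widehat D_n(y)$ are the numerator and denominator in \eqref{def estimator}, and $D(y)=\E_{\nu_B}[\tau_{u^-}^{-1}{\bf 1}_{\{\xi_{u^-}\le y,\,\xi_u\ge y/2\}}]$. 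The first step is to show that on a compact ${\mathcal D}\subset(d(\mathfrak c),\infty)$ the deterministic denominator $D(y)$ is bounded away from $0$, uniformly over ${\mathcal F}^\lambda(\mathfrak c)$, using \eqref{loc control}–\eqref{poly control} and the explicit form of $\nu_B$ from \eqref{representation cle}; this lets me absorb the threshold $\varpi_n$ at a negligible $\log$-cost.

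The second step is a bias/variance analysis of $\widehat N_n(y)$ as a kernel density estimator of $\nu_B(y/2)$. Smoothness of $B$ on ${\mathcal D}$ transfers to smoothness of $\nu_B$ through \eqref{representation cle}, so Assumption \ref{prop K} on $K$ gives a bias of order $h^s|B|_{{\mathcal H}^s({\mathcal D})}$. The variance and the fluctuation $\widehat D_n(y)-D(y)$ require a concentration bound on functionals of the Markov chain $(\xi_u,\tau_u)_{u\in{\mathcal U}_n}$ indexed by the genealogical tree; this is where I plan to invoke the ergodicity of ${\mathcal P}_B$ towards $\nu_B$ (hypothesis imported from \cite{hof}), which yields a variance of order $(nh)^{-1}$ in the sparse-tree case and of the same order up to logarithms in the full-tree case by standard branching Markov chain inequalities (Galton–Watson recursion for $\sum_{|u|=k}$ combined with a Lyapunov/geometric ergodicity argument built on the moment condition $\int\mathbb V^2\,d\mu<\infty$).

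The third step is to turn pointwise bounds into an $L^2({\mathcal D})$ bound. I integrate the squared pointwise bias/variance decomposition over ${\mathcal D}$ and choose $h=c_0 n^{-1/(2s+1)}$ to equate the squared bias $h^{2s}$ and the variance $(nh)^{-1}$, which yields the rate $n^{-2s/(2s+1)}$. The denominator-fluctuation term is controlled the same way via Cauchy–Schwarz, using that $B/(D\vee\varpi_n)$ is uniformly bounded on ${\mathcal D}$ once $\varpi_n=(\log n)^{-1}$ is smaller than the lower bound of $D$, which happens for all $n$ large. A union bound/maximal inequality over a grid of ${\mathcal D}$ to control $\sup_y$ fluctuations costs an extra $\log n$ factor, producing the final $(\log n)\,n^{-s/(2s+1)}$ rate.

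The main obstacle will be the concentration/variance bound for the empirical sums over ${\mathcal U}_n$ in the full-tree setting: the summands are strongly dependent along the tree, so I cannot use i.i.d.\ or even standard $\beta$-mixing arguments. I plan to proceed generation by generation, conditioning on ${\mathcal H}_k$ and using the many-to-one formula of Proposition \ref{many-to-one} together with an $L^2$-geometric-ergodicity estimate for the tagged chain $(\chi,{\mathcal V},Q)$ to dominate the variance by a convergent geometric series; the uniformity over $B\in{\mathcal F}^\lambda(\mathfrak c)\cap{\mathcal H}^s({\mathcal D},M)$ and $\rho\in{\mathcal M}(\rho_{\min},\rho_{\max})$ must be checked at each step, and this uniformity is the subtle point that forces the use of the parameter $\mathfrak c$ and the moment assumption on $\mu$.
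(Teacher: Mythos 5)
First, a point of order: this theorem is not proved in the present paper at all --- it is quoted verbatim from \cite{hof} (note the citation in the theorem header), and Section \ref{statistical analysis} explicitly defers all details to that reference. So there is no in-paper proof to compare against; what you have written is an outline of how the proof goes in \cite{hof}. At the level of architecture your outline is faithful to that proof: linearize the ratio, lower-bound the deterministic denominator $D(y)$ on ${\mathcal D}\subset(d(\mathfrak c),\infty)$ using \eqref{loc control}--\eqref{poly control}, do a bias/variance analysis of the kernel estimator of $\nu_B$ (with the bias $h^s$ coming from the transfer of H\"older smoothness from $B$ to $\nu_B$ through \eqref{representation cle}), and control the fluctuations of tree-indexed empirical sums via ergodicity of ${\mathcal P}_B$.

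That said, as a proof rather than a plan it has one genuine gap and one misattribution. The gap is the step you yourself flag as ``the main obstacle'': the variance/covariance bound for $\sum_{u\in{\mathcal U}_n}$ in the full-tree case. This is not a technicality to be checked later --- it is the entire content of the theorem, and it is precisely what Assumptions 3 and 4 of \cite{hof} (which you never invoke concretely) are designed to deliver: a geometric contraction of ${\mathcal P}_B$ in a suitable weighted norm, \emph{uniform} over $B\in{\mathcal F}^\lambda(\mathfrak c)$ and $\rho\in{\mathcal M}(\rho_{\min},\rho_{\max})$, which one then sums over pairs $(u,v)$ according to the generation of their most recent common ancestor to get a convergent geometric series and hence a variance of order $(nh)^{-1}$ up to logarithms. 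Saying ``standard branching Markov chain inequalities'' does not discharge this; without the uniform spectral-gap input the claimed rate simply does not follow. The misattribution concerns the $\log n$ factor: the risk in the theorem is an integrated $L^2({\mathcal D})$ risk in expectation, so no grid/union bound or maximal inequality over $y$ is needed, and that is not where the logarithm comes from. It comes from the threshold $\varpi_n=(\log n)^{-1}$: when you linearize $1/(\widehat D_n\vee\varpi_n)$ you can only bound this factor by $\varpi_n^{-1}=\log n$ on the event where the empirical denominator is small, and this multiplicative loss propagates to the final rate $(\log n)\,n^{-s/(2s+1)}$. Your step three would therefore need to be rewritten: drop the chaining argument and instead track the $\varpi_n^{-1}$ factor through the linearization together with a deviation bound for $\PP(\widehat D_n(y)<\varpi_n)$ when $D(y)$ is bounded below.
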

 
 In a forthcoming paper with Benoîte de Saporta and Bertrand Cloez, we aim to extend these results by considering bacteria with two distinct poles: old poles and new poles. Additionally, we will provide an adaptive and min-max estimation method. Our approach builds on the findings presented in \cite{MR4303887}, which focuses on estimating the jump rate for a class of piecewise deterministic Markov processes, including marked bacteria. We will adapt these methods to account for the branching process characteristic of bacterial populations. In the writing paper and in \cite{hof} there are simulations on real E. coli bacteria data that have been carried out.

\section{Proof of Theorem \ref{sol transport general}}\label{proof}
We fix $x \in (0,\infty)$ and first prove the result for an initial measure $\mu_x$ as in Proposition \ref{many-to-one}. Let $\phi \in {\mathcal C}^1_0({\mathcal S})$ be nonnegative. By \eqref{identity point measures} we have
$$
\langle n(t,\cdot), \phi \rangle  = \E_x\big[\sum_{i=1}^\infty\phi\big(X_i(t),Z_i(t), P_i (t)\big)\big] 
 = \E_x\big[\sum_{u \in {\mathcal U}}\phi(\xi_t^u,\tau_t^u, p_t)\big] 
$$
and applying Proposition \ref{many-to-one}, we derive 
\begin{equation} \label{n via many-to-one}
\langle n(t,\cdot), \phi \rangle =x\, \E_x\Big[\phi\big(\chi(t), {\mathcal V}(t), Q(t)\big)\frac{e^{\overline{{\mathcal V}}(t)}}{\chi(t)}\Big].
\end{equation}
For $h>0$, introduce the difference operator 
$$\Delta_h f(t) = h^{-1}\big(f(t+h)-f(t)\big).$$
We plan to study the convergence of $ \Delta_h\langle n(t,\cdot),\phi \rangle$ as $h\rightarrow 0$ using representation \eqref{n via many-to-one} in restriction to the events $\{C_{t+h}-C_t=i\}$, for $i=0,1$ and $\{C_{t+h}-C_t \geq 2\}$.
Denote by ${\mathcal F}_t$ the filtration generated by the tagged branch $\big(\chi(s), {\mathcal V}(s), Q(s),s \leq t\big)$. 
\begin{lemma} \label{counting property}
Assume that $B$ is continuous.  Let $x\in (0,\infty)$ and let $\mu_x$ be a probability measure on ${\mathcal S}$ such that $\mu_x(\{x\}\times {\mathcal E}\times \{0,1\})=1$. Abbreviate $\PP_{\mu_x}$ by $\PP_x$. For small $h>0$, we have
$$\PP_x(C_{t+h}-C_t = 1\,|\,{\mathcal F}_t)=B\big(\chi(t)\big)h+h\,\varepsilon(h),$$
with the property $|\varepsilon(h)| \leq \epsilon(h) \rightarrow 0$ as $h \rightarrow 0$, for some deterministic $\epsilon(h)$, and
$$\PP_x(C_{t+h}-C_t \geq 2) \lesssim h^2.$$
\end{lemma}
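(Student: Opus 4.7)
The plan is to reduce both estimates to the explicit survival function of the current cell on the tagged branch, and then to expand in $h$ using the continuity of $B$. Conditional on $\mathcal F_t$, the cell $\vartheta_{C_t}$ alive at time $t$ has known size $\chi(t)$ and known growth rate $\mathcal V(t)$, and by the Markov property applied at time $t$ its residual lifetime has instantaneous hazard $B(\chi(s))$ for $s \geq t$, combining \eqref{def div rate} with \eqref{piecewise deterministic}. This yields the identity
\[
\PP_x\bigl(C_{t+h}-C_t \geq 1 \,\big|\, \mathcal F_t\bigr) \;=\; 1-\exp\Bigl(-\int_t^{t+h} B(\chi(s))\,ds\Bigr),
\]
and, on the event of no division, $\chi(s)=\chi(t)\,e^{\mathcal V(t)(s-t)}$ stays in the compact interval $[\chi(t),\chi(t)e^{e_{\max}h}]$ on which $B$ is uniformly continuous; Taylor-expanding the exponential therefore gives $B(\chi(t))\,h + h\,\varepsilon_1(h)$, with $|\varepsilon_1(h)|$ dominated by a deterministic modulus.

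For the $\geq 2$ bound I would iterate the Markov property at the first division time on the tagged branch. Conditionally on a first division at some $s_1 \in (t, t+h]$, the tagged bacterium becomes a fresh cell of size $\theta_i\chi(s_1)$ whose subsequent division rate is again bounded by
\[
M_{t,h} \;:=\; \sup\bigl\{B(y) : 0 \leq y \leq x\, e^{e_{\max}(t+h)}\bigr\},
\]
which is finite by continuity of $B$ together with the deterministic bound $\chi(s) \leq x\,e^{e_{\max}s}$ a.s.---each division only shrinks the tagged size and the growth rate is capped by $e_{\max}$ via \eqref{controle croissance cumulee}. Integrating the two division-time densities over $t < s_1 < s_2 \leq t+h$ then yields
\[
\PP_x\bigl(C_{t+h}-C_t \geq 2 \,\big|\, \mathcal F_t\bigr) \;\leq\; \tfrac{1}{2}\,M_{t,h}^{\,2}\,h^2,
\]
which is uniform in $\omega$ and, after taking expectations, delivers the second claim. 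Subtracting this estimate from the identity for $\PP_x(\cdot \geq 1 \,|\, \mathcal F_t)$ produces the first formula with $\varepsilon(h) := \varepsilon_1(h) + O(h)$, dominated by a deterministic $\epsilon(h)\to 0$.

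The hard part is precisely to obtain a \emph{uniform} (in $\omega$) bound on the random error $\varepsilon(h)$, given that $B$ is only continuous---not Lipschitz---and that $\chi$ a priori lives in the unbounded set $(0,\infty)$. The whole argument works because of two compactness observations: (i) the range of the tagged size over $[0,t+h]$ is a.s.\ contained in the deterministic compact set $(0, x\,e^{e_{\max}(t+h)}]$; and (ii) on that compact, $B$ is both bounded and uniformly continuous, so every random modulus appearing in the expansion is dominated by a deterministic modulus that vanishes with $h$. All remaining ingredients---computing survival probabilities, decomposing over the first division time, and expanding $e^{-\lambda h}$---are routine.
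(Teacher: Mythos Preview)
Your proposal is correct and follows essentially the same route as the paper: both compute $\PP_x(C_{t+h}-C_t\ge 1\mid\mathcal F_t)$ from the explicit hazard/survival of the current cell, expand using uniform continuity of $B$ on the deterministic compact $(0,\,x\,e^{e_{\max}(t+h)}]$, and then handle $\{C_{t+h}-C_t\ge 2\}$ by restarting the argument at the first post-$t$ division (the paper does this via the stopping time $\Upsilon_t$, you via integrating over $t<s_1<s_2\le t+h$, which is the same thing). Your presentation is slightly cleaner in that you explicitly subtract the $O(h^2)$ term to pass from $\ge 1$ to $=1$; just be careful that in your displayed identity the symbol $\chi(s)$ should be read as the deterministic continuation $\chi(t)e^{\mathcal V(t)(s-t)}$ of the current cell, not the tagged process itself (which jumps at the division you are conditioning on).
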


\begin{proof}

Using the same methodology as in Lemma 1 of \cite{hof}, first, we observe that $$\{C_{t+h}-C_t \geq 1\} = \{t < b_{\vartheta_{C_t}}+\zeta_{\vartheta_{C_t}}\leq t+h\}.$$ Furthermore, since $\xi_{\vartheta_{C_t}} = x\exp\left(\overline{{\mathcal V}}(b_{\vartheta_{C_t}})\right)\theta_0^{\vartheta_{C_t^0}}\theta_1^{\vartheta_{C_t^1}}$, it follows from \eqref{def div rate} that \begin{align*}
&\PP(C_{t+h}-C_t \geq 1\,|\,{\mathcal F}_t) \\
=& \int_{t-b_{\vartheta_{C_t}}}^{t+h-b_{\vartheta_{C_t}}}B\Big(xe^{\overline{{\mathcal V}}(b_{\vartheta_{C_t}})+s{\mathcal V}(s)}\theta_0^{C_t^0}\theta_1^{C_t^1}\Big)\exp\Big(-\int_0^s B\Big(xe^{\overline{{\mathcal V}}(b_{\vartheta_{C_t}})+s'{\mathcal V}(s')}\theta_0^{C_t^0}\theta_1^{C_t^1}
\Big)ds'\Big)ds.
\end{align*}

By introducing the term $B\left(xe^{\overline{{\mathcal V}}(b_{\vartheta_{C_t}}) + {\mathcal V}(t)(t - b_{\vartheta_{C_t}})}\theta_0^{C_t^0}\theta_1^{C_t^1}\right)$ into the integral, and recognizing that $\overline{{\mathcal V}}(b_{\vartheta_{C_t}}) + {\mathcal V}(t)(t - b_{\vartheta_{C_t}}) = \overline{{\mathcal V}}(t)$, we arrive at the first part of the lemma, utilizing the representation \eqref{rep chi} and the uniform continuity of $B$ over compact domains.

For the second part, consider the $({\mathcal F}t)$-stopping time $$\Upsilon_t = \inf\{s>t, C_s - C_t \geq 1\}$$ and note that ${C_{t+h} - C_t \geq 1} = {\Upsilon_t \leq t + h} \in {\mathcal F}_{\Upsilon_t}$. Then, by expressing $$\{C_{t+h}-C_t \geq 2\} = \{\Upsilon_t < t+h,\;\Upsilon_{\Upsilon_{t}} \leq t+h\},$$ and conditioning on ${\mathcal F}_{\Upsilon_t}$, we obtain\begin{align*}
& \PP(C_{t+h}-C_t \geq 2) \\
= &\; \E\Big[\int_t^{t+h-\Upsilon_t}B\Big(xe^{\overline{{\mathcal V}}(b_{\vartheta_{C_t}})+s{\mathcal V}(s)}\theta_0^{C_t^0}\theta_1^{C_t^1}\Big)e^{-\int_0^s B\big(xe^{\overline{{\mathcal V}}(b_{\vartheta_{C_t}})+s'{\mathcal V}(s')}\theta_0^{C_t^0}\theta_1^{C_t^1}
\big)ds'}ds{\bf 1}_{\{\Upsilon_t < t+h\}}
\Big] \\
\leq &\;
h \sup_{y \leq x \exp (\max(1/\theta_0, 1/\theta_1)e_{\max} t)} 
B(y)\,\PP(\Upsilon_t < t+h).
\end{align*}
Similarly, $\PP(\Upsilon_t < t + h) \lesssim h$, leading to the desired conclusion.

\end{proof}

Since $\phi \in {\mathcal C}^1_0({\mathcal S})$, there exists $d(\phi)>0$ such that $\phi(y,v)=0$ if $y \geq d(\phi)$. By \eqref{controle croissance cumulee}, we infer 
\begin{equation} \label{integrabilite reste}
\Big|\phi\big(\chi(t), {\mathcal V}(t)\big)\frac{e^{\overline{{\mathcal V}}(t)}}{\chi(t)}\Big| \leq \sup_{y,v}\phi(y,v)\frac{\exp(e_{\max}t)}{d(\phi)}
\end{equation}
By Lemma \ref{counting property} and \eqref{integrabilite reste}, we derive
\begin{equation} \label{first estimate poisson}
\E_x\Big[\Delta_h\Big(\phi\big(\chi(t), {\mathcal V}(t)\big)\frac{e^{\overline{{\mathcal V}}(t)}}{\chi(t)}\Big){\bf 1}_{\{C_{t+h}-C_t \geq 2\}}\Big] \lesssim h.
\end{equation}
On the event $\{C_{t+h}-C_t=0\}$, the process ${\mathcal V}(s)$ is constant for $s \in [t,t+h)$ and so is $\frac{e^{\overline{{\mathcal V}}(s)}}{\chi(s)}$ thanks to \eqref{rep chi}. It follows that
$$\Delta_h\Big(\phi\big(\chi(t), {\mathcal V}(t)\big)\frac{e^{\overline{{\mathcal V}}(t)}}{\chi(t)}\Big)= \Delta_h\phi\big(\chi(t), {\mathcal V}(s)\big)_{\big|_{s=t}}\frac{e^{\overline{{\mathcal V}}(t)}}{\chi(t)}$$
on $\{C_{t+h}-C_t=0\} $ and also
$$
\Big|\Delta_h\phi\big(\chi(t), {\mathcal V}(s)\big)_{\big|_{s=t}}\frac{e^{\overline{{\mathcal V}}(t)}}{\chi(t)}\Big| \leq \sup_{y,v}|\partial_y\phi(y,v)|xe_{\max}  \frac{\exp(2e_{\max}t)}{d(\phi)}
$$
on $\{C_{t+h}-C_t=0\}$ likewise. Since $\PP_x(C_{t+h}-C_t=0)\rightarrow 1$ as $h\rightarrow 0$, by dominated convergence
\begin{align}
& \;x\,\E_x\Big[\Delta_h\Big(\phi\big(\chi(t), {\mathcal V}(t)\big)\frac{e^{\overline{{\mathcal V}}(t)}}{\chi(t)}\Big){\bf 1}_{\{C_{t+h}-C_t =0\}}\Big] \nonumber \\
\rightarrow &\; x\,\E_x\big[\partial_1\phi\big(\chi(t), {\mathcal V}(t)\big){\mathcal V}(t)e^{\overline{{\mathcal V}}(t)}\big]\;\;\text{as}\;\;h\rightarrow 0.
\label{convergence poisson 2}
\end{align}
By Proposition \ref{many-to-one} again, this last quantity is equal to $\langle n(t,dx,dv), xv\,\partial_x \phi\rangle$. On $\{C_{t+h}-C_t=1\}$, we successively have
$$(\chi(t+h), Q(t+h)) = \theta_{Q(t+h)}\chi(t)+\varepsilon_1(h),$$
$$\phi\big(\chi(t+h), {\mathcal V}(t+h),Q(t+h)\big) = \phi\big(\chi(t)\theta_{Q(t+h)}, {\mathcal V}(t+h)\big)+\varepsilon_2(h)$$
and
$$\exp\big(\overline{{\mathcal V}}(t+h)\big) = \exp\big(\overline{{\mathcal V}}(t)\big)+\varepsilon_3(h)$$
with the property $|\varepsilon_i(h)|\leq \epsilon_1(h) \rightarrow 0$ as $h\rightarrow 0$, where $\epsilon_1(h)$ is deterministic, thanks to \eqref{rep chi} and \eqref{controle croissance cumulee}. Moreover, 
$${\mathcal V}(t+h) = \tau_{\vartheta_{C_t+1}}\;\;\text{on}\;\;\{C_{t+h}-C_t=1\}.$$
It follows that
\begin{align*}
& \E_x\Big[\phi\big(\chi(t+h), {\mathcal V}(t+h),Q(t+h)\big)\frac{e^{\overline{{\mathcal V}}(t+h)}}{\chi(t+h)}{\bf 1}_{\{C_{t+h}-C_t =1\}}\Big] \\
= &\; \E_x\Big[\phi\big(\chi(t)\theta_{\vartheta_{C_t +1}}, \tau_{\vartheta_{C_t+1}}, p_{\vartheta_{C_t +1}}\big)\frac{e^{\overline{{\mathcal V}}(t)}}{\chi(t)\theta_{\vartheta_{C_t +1}}}{\bf 1}_{\{C_{t+h}-C_t =1\}}\Big]+\epsilon_2(h) \\
= &\; \E_x\Big[\phi\big(\chi(t)\theta_{\vartheta_{C_t +1}}, \tau_{\vartheta_{C_t+1}},p_{\vartheta_{C_t +1}}\big)\frac{e^{\overline{{\mathcal V}}(t)}}{\chi(t)\theta_{\vartheta_{C_t +1}}}{\bf 1}_{\{C_{t+h}-C_t \geq 1\}}\Big]+\epsilon_3(h)
\end{align*}
where $\epsilon_2(h), \epsilon_3(h)\rightarrow 0$ as $h\rightarrow 0$, and where we used the second part of Lemma \ref{counting property} in order to obtain the last equality. Conditioning with respect to ${\mathcal F}_t \bigvee \tau_{\vartheta_{C_t+1}}$ and using that
$\{C_{t+h}-C_t \geq 1\}$ and $\tau_{\vartheta_{C_t+1}}$ are independent, applying the first part of Lemma \ref{counting property}, this last term is equal to
\begin{align*}
&  \E_x\Big[\phi\big(\chi(t)\theta_{\vartheta_{C_t +1}}, \tau_{\vartheta_{C_t+1}}, p_{\vartheta_{C_t +1}}\big)\frac{e^{\overline{{\mathcal V}}(t)}}{\theta_{\vartheta_{C_t +1}}\chi(t)}B\big(\chi(t)\big)h\Big] + \epsilon_3(h)\\
=\;&  \E_x\Big[\int_{{\mathcal E}}\phi\big(\chi(t)\theta_0 , v',0\big)\rho_0\big({\mathcal V}(t), dv'\big)\frac{e^{\overline{{\mathcal V}}(t)}}{\theta_0\chi(t)}B\big(\chi(t)\big)h\Big] \\&+ \E_x\Big[\int_{{\mathcal E}}\phi\big(\chi(t)\theta_1 , v',1\big)\rho_1\big({\mathcal V}(t), dv'\big)\frac{e^{\overline{{\mathcal V}}(t)}}{\theta_1\chi(t)}B\big(\chi(t)\big)h\Big] + \epsilon_4(h)
\end{align*}
where $\epsilon_4(h)\rightarrow 0$ as $h \rightarrow 0$. Finally, using Lemma \ref{counting property} again, we derive
\begin{align}
& \E_x\Big[\Delta_h\Big(\phi\big(\chi(t), {\mathcal V}(t), Q(t)\big)\frac{e^{\overline{{\mathcal V}}(t)}}{\chi(t)}\Big){\bf 1}_{\{C_{t+h}-C_t =1\}}\Big] \nonumber \\
\rightarrow &\;\E_x\Big[\Big(\int_{{\mathcal E}}\frac{\phi\big(\chi(t)\theta_0 , v',0\big)\rho_0\big({\mathcal V}(t), dv'\big)}{\theta_0}+\frac{\phi\big(\chi(t)\theta_1 , v',1\big)\rho_1\big({\mathcal V}(t), dv'\big)}{\theta_1}-\phi\big(\chi(t),{\mathcal V}(t)\big)\Big)\\ &\times\frac{e^{\overline{{\mathcal V}}(t)}}{\chi(t)}B\big(\chi(t)\big)\Big]
\label{convergence poisson 1}
\end{align}
as $h \rightarrow 0$. By Proposition \ref{many-to-one}, this last quantity is equal to
$$\big\langle n(t,dx,dv), \big(\int_{{\mathcal E}}\frac{\phi(x\theta_0,v',0)}{\theta_0}\rho_0(v,dv')+\frac{\phi(x\theta_1,v',1)}{\theta_1}\rho_1(v,dv')-\phi(x,v)\big)B(x)\big\rangle$$
which, in turn,  is equal to 
\begin{align*}
&\big\langle n(t,2dx,dv),\int_{{\mathcal E}}\frac{\phi(x,v',0)}{\theta_0^2}\rho_0(v,dv')B(x/\theta_0)\\&
+\frac{\phi(x,v',1)}{\theta_1^2}\rho_1(v,dv')B(x/\theta_1)\big\rangle-\big\langle n(t,dx,dv),\phi(x,v)B(x)\big\rangle\end{align*}
by a simple change of variables. Putting together the estimates \eqref{first estimate poisson}, \eqref{convergence poisson 2} and \eqref{convergence poisson 1}, we conclude
\begin{align*}
&\partial_t \langle n(t,dx,dv), \phi\rangle - \langle n(t,dx,dv), xv\partial_x \phi\rangle + \langle n(t,dx,dv)B(x),\phi\rangle \\
 = & \;  \big\langle n(t,2dx,dv),\int_{{\mathcal E}}\frac{\phi(x,v',0)}{\theta_0^2}\rho_0(v,dv')B(x/\theta_0)+\frac{\phi(x,v',1)}{\theta_1^2}\rho_1(v,dv')B(x/\theta_1)\big\rangle\big\rangle,
\end{align*}
which is the dual formulation of \eqref{transport variabilite}. The proof is complete.


\begin{thebibliography}{99}

\bibitem{Sturm}
M.~Arnoldini A. Benecke M. Ackermann M. Benz J.~Dormann A.~Sturm, M.~Heinemann
  and W.-D. Hardt.
\newblock The cost of virulence: Retarded growth of salmonella typhimurium
  cells expressing type iii secretion system 1.
\newblock {\em PLoS Pathogens}, 7(7):10, 2011.

\bibitem{BB}
 B.~Delyon, B. de~Saporta, N.~Krell and L.~Robert.
\newblock Investigation of asymmetry in e. coli growth rate.

\bibitem{BDMV}
V.~Bansaye, J-F. Delmas, L.~Marsalle, and V.C. Tran.
\newblock Limit theorems for {M}arkov processes indexed by supercritical
  {G}alton {W}atson tree.
\newblock {\em The Annals of Applied Probability}, 21:2263--2314, 2011.

\bibitem{Bertoin}
J.~Bertoin.
\newblock {\em Random fragmentation and Coagulation Processes}.
\newblock Cambridge University Press, 2006.

\bibitem{CRW}
B.~Chauvin, A.~Rouault, and A.~Wakolbinger.
\newblock Growing conditioned trees.
\newblock {\em Stochastic Process. Appl.}, 39:117--130, 1991.



\bibitem{hof}
 M.~Doumic, M.~Hoffmann, N.~Krell and L.~Robert.
\newblock Statistical estimation of a growth-fragmentation model observed on a
  genealogical tree.
\newblock {\em Bernoulli}, 21(3):1760--1799, 2015.

\bibitem{hof2}
M.~Doumic, M.~Hoffmann, N.~Krell, L. Robert, S.~Aymerich,  and J.~Robert.
\newblock Division control in escherichia coli is based on a size-sensing
  rather than timing mechanism.
\newblock {\em DBMC Biology}, 12:17, 2014.


\bibitem{Haas}
B.~Haas.
\newblock Loss of mass in deterministic and random fragmentations.
\newblock {\em Stoch. Proc. App.}, 106:245--277, 2003.

\bibitem{HR}
S.~C. Harris and M.~I. Roberts.
\newblock The many-to-few lemma and multiple spines.
\newblock {\em arXiv:1106.4761v3}, 2012.

\bibitem{MR4303887}
N.~Krell and E.~Schmisser.
\newblock Nonparametric estimation of jump rates for a specific class of
  piecewise deterministic {M}arkov processes.
\newblock {\em Bernoulli}, 27(4):2362--2388, 2021.

\end{thebibliography}
\end{document}